\title[the probability that a binomial variable]
{On the probability that a binomial variable is at most its expectation}
\date{2 October, 2020; typos corrected 19 October, 2019}
\author{Svante Janson}
\thanks{Supported by the Knut and Alice Wallenberg Foundation}
\address{Department of Mathematics, Uppsala University, PO Box 480,
SE-751~06 Uppsala, Sweden}
\email{svante.janson@math.uu.se}
\newcommand\urladdrx[1]{{\urladdr{\def~{{\tiny$\sim$}}#1}}}
\keywords{binomial probabilities; Edgeworth expansions; median}
\subjclass[2010]{60C05; 60E15; 60F05} 
\numberwithin{equation}{section}
\renewcommand\le{\leqslant}
\renewcommand\ge{\geqslant}
\theoremstyle{plain}
\newtheorem{theorem}{Theorem}[section]
\newtheorem{lemma}[theorem]{Lemma}
\newtheorem{conjecture}[theorem]{Conjecture}
\theoremstyle{definition}
\newtheorem{exampleqqq}[theorem]{Example}
\newtheorem{remarkqqq}[theorem]{Remark}
\newenvironment{remark}{\begin{remarkqqq}}
  {\hfill\qedsymbol\end{remarkqqq}}
\theoremstyle{remark}
\newenvironment{romenumerate}[1][-10pt]{
\addtolength{\leftmargini}{#1}\begin{enumerate}
 }{\end{enumerate}}
\newcounter{oldenumi}
{\setcounter{oldenumi}{\value{enumi}}
\begin{romenumerate} \setcounter{enumi}{\value{oldenumi}}}
{\end{romenumerate}}
\newcounter{thmenumerate}
\newcounter{xenumerate}   
\newcounter{cases}
\newcounter{subcases}
\newcommand\pfcase[1]{\smallskip\noindent
 \refstepcounter{cases}%
 \setcounter{subcases}{0}%
 \emph{Case \arabic{cases}: #1.} \noindent}
\newcommand\pfsubcase[1]{\smallskip\noindent
 \refstepcounter{subcases}%
 \emph{Case \arabic{cases}\alph{subcases}: #1.} \noindent}
\newcommand{\refT}[1]{Theorem~\ref{#1}}
\newcommand{\refL}[1]{Lemma~\ref{#1}}
\newcommand{\refR}[1]{Remark~\ref{#1}}
\newcommand{\refS}[1]{Section~\ref{#1}}
\newcommand{\refF}[1]{Figure~\ref{#1}}
\newcommand{\refConj}[1]{Conjecture~\ref{#1}}
\newcommand{\sumi}{\sum_{i=1}^\infty}
\newcommand{\sumj}{\sum_{j=1}^\infty}
\newcommand{\sumin}{\sum_{i=1}^n}
\newcommand\set[1]{\ensuremath{\{#1\}}}
\newcommand\bigset[1]{\ensuremath{\bigl\{#1\bigr\}}}
\newcommand\xpar[1]{(#1)}
\newcommand\bigpar[1]{\bigl(#1\bigr)}
\newcommand\Bigpar[1]{\Bigl(#1\Bigr)}
\newcommand\lrpar[1]{\left(#1\right)}
\newcommand\Bigsqpar[1]{\Bigl[#1\Bigr]}
\newcommand\bigabs[1]{\bigl\lvert#1\bigr\rvert}
\def\rompar(#1){\textup(#1\textup)}    
\newcommand\xfrac[2]{#1/#2}
\newcommand\parfrac[2]{\lrpar{\frac{#1}{#2}}}
\def\xexp(#1){e^{#1}}
\newcommand\floor[1]{\lfloor#1\rfloor}
\newcommand\frax[1]{\{#1\}}
\newcommand\ntoo{\ensuremath{{n\to\infty}}}
\newcommand\punkt{\xperiod}    
\newcommand\iid{i.i.d\punkt}    
\newcommand\ie{i.e\punkt}
\newcommand\eg{e.g\punkt}
\newcommand\ii{\mathrm{i}}
\newcommand\bbR{\mathbb R}
\newcommand\bbZ{\mathbb Z}
\newcounter{CC}
\newcommand{\CC}{\stepcounter{CC}\CCx} 
\newcommand{\CCx}{C_{\arabic{CC}}}     
\newcommand{\CCdef}[1]{\xdef#1{\CCx}}     
\newcommand{\CCname}[1]{\CC\CCdef{#1}}    
\newcounter{cc}
\newcommand\E{\operatorname{\mathbb E{}}}
\renewcommand\P{\operatorname{\mathbb P{}}}
\newcommand\Var{\operatorname{Var}}
\newcommand\Po{\operatorname{Po}}
\newcommand\Bi{\operatorname{Bi}}
\newcommand\Be{\operatorname{Be}}
\newcommand\gb{\beta}
\newcommand\gam{\gamma}
\newcommand\gG{\Gamma}
\newcommand\gl{\lambda}
\newcommand\gL{\Lambda}
\newcommand\gs{\sigma}
\newcommand\gss{\sigma^2}
\newcommand\qw{^{-1}}
\newcommand\qww{^{-2}}
\newcommand\qqw{^{-1/2}}
\newcommand\oi{\ensuremath{[0,1]}}
\newcommand\dtv{d_{\mathrm{TV}}}
\newcommand\dd{\,\mathrm{d}}
\newcommand\ddq{\mathrm{d}}
\newcommand\ddd[1]{\frac{\ddq}{\ddq#1}}
\newcommand\ddx{\ddd{x}}
\newcommand\ddxx[1]{\frac{\ddq^{#1}}{\ddq x^{#1}}}
\newcommand\ddxxx[2]{#2^{(#1)}}
\newcommand\rhs{right-hand side}
\newcommand\HH{\Psi}
\newcommand\HHH{\Psi^*}
\newcommand\tHHH{\widetilde{\Psi}^*}
\newcommand\tR{\widetilde{R}}
\newcommand\pox{\varpi}
\newcommand\xki{\frac{k+1}2}
\newcommand\mm{^{(m)}}
\newcommand\Tk{T_1}
\newcommand\sqrppi{\sqrt{2\pi}}
\newcommand\Binp{\Bi(n,p)}
\newcommand\Binm{\Bi(n,m/n)}
\newcommand\jjj{\tfrac{2}{3}}
\newcommand\ppjjj{\bigpar{\jjj}}
\newcommand\Vasek{Va\v sek}
\newcommand\Chvatal{Chv{\'a}tal}
\newcommand{\Holder}{H\"older}
\begin{document}

\begin{abstract} 
Consider the probability that a binomial random variable Bi$(n,m/n)$ with
integer expectation $m$ is at most its expectation.
Chv{\'a}tal conjectured that for any given $n$, this probability is smallest
when $m$ is the integer closest to $2n/3$.
We show that this holds when $n$ is large.
\end{abstract}

\maketitle

\section{Introduction}\label{S:intro}

Consider the probability $\P\bigpar{\Binp\le np}$
that a binomial random variable $\Bi(n,p)$ is
less that or equal to its mean. 
(We slighly abuse notation, and let $\Binp$ denote both the binomial
distribution and a binomial random variable.)
By the central limit theorem, unless $n$ or $p(1-p)$ is small,
this probability is close to $\frac12$; in fact, the Berry--Esseen theorem
\cite{Berry} and \cite{Esseen42} (see also \eg{} \cite[Theorem 7.6.1]{Gut})
shows that $\P(\Binp\le np)=\frac12+O\bigpar{(np(1-p))\qqw}$.
(See also the explicit bounds in 
\cite{Doerr},
\cite{GreenbergM},
\cite{PelekisR},
\cite{RigolletT},
\cite{Slud}.%
)

In the case when $np=m$ is an integer,
\citet{Neumann} showed that the mean $np$ is also a (strong)
median, \ie,
\begin{align}
\P(\Binp<np) < \frac12 < \P(\Binp\le np) ,
\qquad np=m\in\set{0,\dots, n} 
.\end{align}
(See \cite{JogdeoS}, \cite{KaasB}, and 
\cite[Exercise MPR-24]{KnuthIV.5} for other proofs.)
It follows that for any fixed $n\ge1$, the probability $\P\bigpar{\Binp\le np}$
regarded as a function of $p\in\oi$,
oscillates around $\frac12$,  with upward jumps at each $m/n$ and
monotone decrease between the jumps.
See \refF{fig1}
for an example.

\begin{figure}[tbh]
  \centering
\includegraphics[width=1.0\textwidth]
{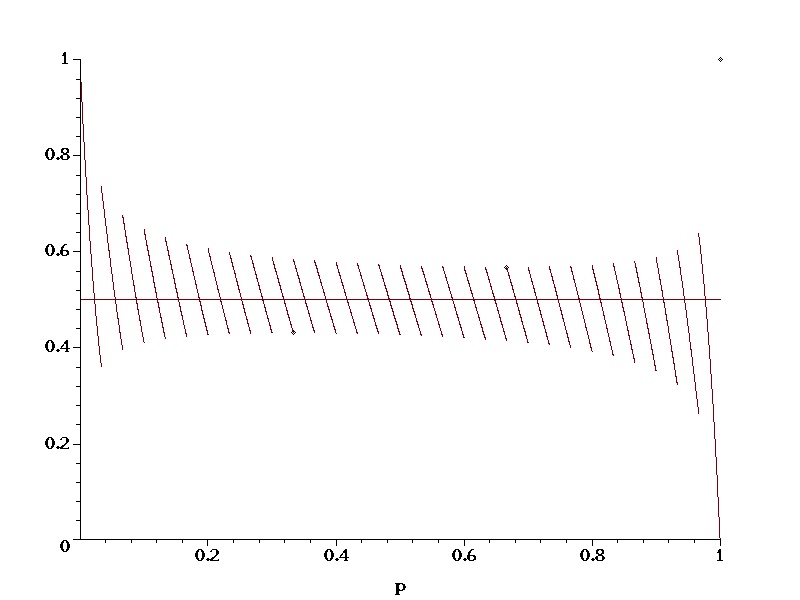}
\caption{
$\P(\Binp\le np)$ for $n=30$.
For integer values of $m=np$,
the  minimum of $\P(\Binp\le np)$ (at $m=20$)
and the maximum of $\P(\Binp< np)$ (at $m=10$)
are marked with dots.
}
 \label{fig1}
\end{figure}

Consider again the case when $np=m$ is an integer, illustrated by the local
maxima in \refF{fig1}.
\Vasek{} \Chvatal{} (personal communication) made the following conjecture,
based on  numerical experiments.

\begin{conjecture}[\Chvatal]\label{Chvatal}
For any fixed $n\ge2$, 
as $m$ ranges over $\set{0,\dots,n}$,
the probability 
$q_m:=\P\bigpar{\Binm\le m}$ is smallest when 
$m$ is the integer closest to
$2n/3$.
\end{conjecture}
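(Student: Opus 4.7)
The plan is to derive an Edgeworth-type asymptotic expansion of $q_m$ in powers of $n^{-1/2}$, show that the leading coefficient is a smooth function of $p=m/n$ with unique minimum at $p=2/3$, and then carefully compare values of $q_m$ near the conjectured minimizer.

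First I would write
\[
q_m = \tfrac12 + \tfrac12\P(\Binp=m) + \tfrac12\bigl[\P(\Binp<m)-\P(\Binp>m)\bigr]
\]
and apply the local central limit theorem together with the continuity-corrected Edgeworth expansion at $x_n = 1/(2\sqrt{np(1-p)})$. Using that the third cumulant of $\Be(p)$ is $\kappa_3 = p(1-p)(1-2p)$, and tracking parities (the Edgeworth polynomial $P_k$ is even iff $k$ is odd, while $\Phi(x_n)-\tfrac12$ is odd in $x_n$), one sees that only half-integer powers of $1/n$ survive, giving
\[
q_m = \tfrac12 + \frac{A_0(p)}{\sqrt n} + \frac{A_1(p)}{n^{3/2}} + O\bigl(n^{-5/2}\bigr), \qquad A_0(p) := \frac{2-p}{3\sqrt{2\pi\, p(1-p)}}.
\]
An elementary calculation gives $A_0'(2/3)=0$ and $A_0''(2/3) = 27/(8\sqrt\pi) > 0$, and $A_0$ is strictly convex on $(0,1)$ with $A_0(p)\to\infty$ as $p\to 0$ or $1$; hence $A_0$ is uniquely minimized at $p=2/3$.

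From here the proof splits into two regimes. For $p=m/n$ bounded away from $2/3$, convexity of $A_0$ and boundary growth already give $q_m - q_{m^*}\gtrsim 1/\sqrt n$, easily dominating the $O(n^{-3/2})$ remainder; edge cases $p$ near $0$ or $1$ are handled by direct estimates such as $q_m \ge 1-(1-p)^n$ for small $m$. In the local regime $|m/n - 2/3| = O(1/n)$, writing $m = m^* + k$ and $\delta^* := m^* - 2n/3 \in (-\tfrac12,\tfrac12)$, a Taylor expansion of $A_0$ and $A_1$ around $2/3$ yields
\[
q_{m^*+k} - q_{m^*} = \frac{A_0''(2/3)\bigl(k^2 + 2\delta^*k\bigr)}{2\,n^{5/2}} + \frac{A_1'(2/3)\,k}{n^{5/2}} + O\bigl(n^{-7/2}\bigr),
\]
which is positive for every nonzero integer $k$ precisely when $|A_1'(2/3)| < A_0''(2/3)/6$ (worst case $k=\pm 1$, $\delta^*=\mp 1/3$).

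I expect the main obstacle to be the explicit computation of $A_1'(2/3)$ and verification of the inequality $|A_1'(2/3)| < 27/(48\sqrt\pi) \approx 0.317$. The coefficient $A_1(p)$ collects contributions from the second Edgeworth polynomial $P_2$ (involving $\kappa_4$ and $\kappa_3^2$, hence depending on $p$ through $1-6p(1-p)$ and $(1-2p)^2$), the third polynomial $P_3$, and higher-order Taylor terms of $\Phi$ and $\phi$ at $x_n$. The algebra is mechanical but lengthy, and one must have uniform Edgeworth error bounds in $p$ (available from classical refinements of Berry--Esseen cited in the introduction); if the critical numerical inequality turned out to fail, a finer argument treating the three residues $n \bmod 3$ separately would be required.
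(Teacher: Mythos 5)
Your strategy is essentially the paper's: an Edgeworth-type expansion $q_m=\tfrac12+h_1(p)n^{-1/2}+h_3(p)n^{-3/2}+\dots$ with leading coefficient $h_1(p)=\frac{2-p}{3\sqrt{2\pi p(1-p)}}$ minimized at $p=2/3$, followed by a local comparison near $2n/3$ that hinges on exactly the criterion you isolate, namely $|A_1'(2/3)|<A_0''(2/3)/6$ (worst case $k=\pm1$, offset $\mp1/3$). For what it is worth, the paper computes $h_1''(2/3)=27/(8\sqrt\pi)$ and $h_3'(2/3)=9/(40\sqrt\pi)=\tfrac1{15}h_1''(2/3)$, so your unverified numerical inequality does hold, with room to spare ($1/15<1/6$). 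Note also that, like the paper, your argument can at best give the conjecture for $n\ge n_0$, not for every fixed $n\ge2$.

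Two steps, however, are genuinely gapped as written. First, in the local regime your displayed formula for $q_{m^*+k}-q_{m^*}$ with error $O(n^{-7/2})$ does not follow from the expansion you state: that expansion has remainder $O(n^{-5/2})$, which is the \emph{same} order as the main terms $A_0''(2/3)(k^2+2\delta^*k)/(2n^{5/2})$ you want to compare. You must push the expansion two more orders, so that the $n^{-5/2}$ coefficient (a smooth function of $p$) cancels in the difference up to $O(n^{-7/2})$; this is precisely why the paper invokes its expansion theorem with $k=6$ and introduces $h_5(p)$ in the delicate range $|m-2n/3|=O(1)$. Second, the Edgeworth error is not uniform in $p$: it is really $O\bigl((np(1-p))^{-5/2}\bigr)$ and the uniform expansion is only available when $np(1-p)$ is not too small (the paper requires $\sqrt{np(1-p)}\ge\log n$), so the ranges $m\lesssim\log^2 n$ and $m\gtrsim n-\log^2 n$ need a separate argument. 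Your proposed direct estimate $q_m\ge1-(1-p)^n$ is false there: already for $m=2$ and $n$ large, $q_2\to \P(\Po(2)\le2)=5e^{-2}\approx0.68$ while $1-(1-2/n)^n\to1-e^{-2}\approx0.86$. What does work (and is what the paper does) is Poisson approximation with total-variation error at most $m/n$, combined with $\P(\Po(m)\le m)\ge\tfrac12+cm^{-1/2}$ (and a Poisson monotonicity lemma, or the Rigollet--Tong bound, for the smallest $m$), which comfortably beats $q_{m^*}=\tfrac12+O(n^{-1/2})$. Finally, your two regimes as described leave the band $K/n\le|p-2/3|\le\eps$ uncovered; there the gain is only of order $(p-2/3)^2n^{-1/2}$, not $n^{-1/2}$, so you need the quadratic lower bound on $A_0(p)-A_0(2/3)$ against the $A_1$ increment and the remainder---easy, but it must be said.
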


The purpose of the present paper is to show that this conjecture
holds for large $n$. Moreover, at least for large $n$,
the probabilities $q_m$ are inverse unimodal, 
\ie, have no other local minimum.
(The latter property was partly proved by \citet[(29)]{RigolletT}, who
proved, for 
any $n$, that $q_m$ decreases for $m\le n/2$. 
We conjecture that also the inverse unimodality holds for all $n$.)

\begin{theorem}
  \label{T1}
There exists $n_0$ such that \refConj{Chvatal} is true for every $n\ge n_0$.
Moreover, still for $n\ge n_0$,
the difference $q_{m+1}-q_m$ is negative when $m+\frac12<2n/3$ and positive when
$m+\frac12>2n/3$. 
\end{theorem}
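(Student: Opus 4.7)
The plan is to establish a precise asymptotic expansion for $q_m$ when $p=m/n$ is bounded away from $0$ and $1$, from which both claims of the theorem can be read off; boundary cases $\min(m,n-m)$ small will be handled by separate \emph{ad hoc} arguments.

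\emph{Main expansion.} Applying the Edgeworth expansion with continuity correction to $\Bin(n,p)$, one obtains, uniformly for $p$ in compact subsets of $(0,1)$,
\begin{equation*}
q_m = \tfrac12 + \frac{2-p}{3\sqrt{2\pi n p(1-p)}} + O(n^{-3/2}).
\end{equation*}
The leading correction is the sum of two $n^{-1/2}$-sized contributions: half of the point mass $\P(\Bin(n,p)=m)\approx 1/\sqrt{2\pi np(1-p)}$ (from the continuity correction, exploiting that $m$ lies at the mean), and the skewness-driven tail asymmetry $(1-2p)/\bigpar{6\sqrt{2\pi np(1-p)}}$. The cleanest derivation is via $q_m\approx F_n\bigpar{1/(2\sigma)}$, where $\sigma=\sqrt{np(1-p)}$ and $F_n$ is the Edgeworth-corrected CDF of $(\Bin(n,p)-m)/\sigma$; Taylor-expanding around $0$ produces the two contributions above.

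\emph{Analysis of the leading function.} Setting $g(p):=(2-p)/\sqrt{p(1-p)}$, one computes $g'(p)=(3p-2)/\bigpar{2(p(1-p))^{3/2}}$, so $g$ has a unique strict minimum on $(0,1)$ at $p=2/3$. Hence for $n$ large, $q_m$ is minimized over $m$ in the bulk at the integer nearest $2n/3$. For the difference, the mean value theorem applied to the main expansion gives
\begin{equation*}
q_{m+1}-q_m = \frac{g'\bigpar{(m+\tfrac12)/n}}{3\sqrt{2\pi}\,n^{3/2}} + \text{(remainder)},
\end{equation*}
whose leading term has exactly the sign of $m+\tfrac12-\tfrac{2n}{3}$, proving the second claim (and hence also the first: $q_m$ is inverse unimodal in the bulk).

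\emph{Boundary cases and main obstacle.} For $\min(m,n-m)\le\delta n$, the Edgeworth formula is not directly applicable, but here $q_m-\tfrac12$ is much larger than $q_{m^*}-\tfrac12=\Theta(n^{-1/2})$ at the conjectured minimizer: for $m$ or $n-m$ bounded, the Poisson limit $\Bin(n,m/n)\Rightarrow\Po(m)$ (and the reflection $\Bin(n,1-p)\eqd n-\Bin(n,p)$) gives $q_m\to\P(\Po(m)\le m)$, bounded strictly above $\tfrac12$; for intermediate ranges, a Poisson--Edgeworth estimate (or the same $g$-formula pushed to $p\to0$) yields $q_m-\tfrac12=\Theta\bigpar{1/\sqrt{np(1-p)}}$, still dominating. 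The main obstacle throughout is the sharp error control needed for the second claim in the delicate regime $m+\tfrac12-\tfrac{2n}{3}=O(1)$: there the leading signal in $q_{m+1}-q_m$ is itself only $\Theta(n^{-5/2})$, so one must carry the Edgeworth expansion to several explicit terms and verify uniform smooth $p$-dependence of the remainder, possibly invoking the kurtosis term $\propto 1-6p(1-p)$ at order $n^{-2}$ to pin down the sign.
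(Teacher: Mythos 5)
There is a genuine gap, and it sits exactly where you locate your ``main obstacle'' --- you name it but do not resolve it, and the theorem's second claim hinges on it. First, the expansion you propose, $q_m=\frac12+\frac{2-p}{3\sqrt{2\pi np(1-p)}}+O(n^{-3/2})$, cannot yield the sign of $q_{m+1}-q_m$ by ``mean value theorem $+$ remainder'': the $O(n^{-3/2})$ remainder is not smooth in $m$, so it does not shrink under differencing and is of the same order as the signal $g'(p)n^{-3/2}$ for every $m$ (and of larger order near $2n/3$). One must keep the explicit $n^{-3/2}$ term $h_3(p)$, smooth in $p$, and push the non-differentiable error down to $O\bigl((np(1-p))^{-5/2}\bigr)$ before differencing the explicit terms (the paper's $k=4$ expansion). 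Even then, in the window $m+\frac12-\frac{2n}3=O(1)$ the signal is only $\Theta(n^{-5/2})$, so one needs the expansion with error $O(n^{-7/2})$ (the paper takes $k=6$) and, decisively, a numerical comparison: the sign of $q_{m+1}-q_m$ there is that of $h_1''(2/3)\bigl(m+\frac12-\frac{2n}3\bigr)+h_3'(2/3)$, and since the smallest nonzero value of $\bigl|m+\frac12-\frac{2n}3\bigr|$ is $\frac16$, the claimed threshold at exactly $2n/3$ requires $|h_3'(2/3)|<\frac16 h_1''(2/3)$; the paper computes $h_3'(2/3)=\frac{9}{40\sqrt\pi}=\frac1{15}h_1''(2/3)$ and uses $\frac1{15}<\frac16$. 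Without this computation the sign change could sit at a shifted location, so ``pin down the sign'' is the unproved crux rather than a routine check; moreover your suggestion that the kurtosis term at order $n^{-2}$ decides it is off the mark, since all even-order Edgeworth terms vanish at an integer mean --- it is the $n^{-3/2}$ coefficient $h_3$ whose derivative at $2/3$ matters.

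Two further gaps concern the boundary. The theorem asserts the sign of $q_{m+1}-q_m$ for \emph{all} $m$, but your boundary argument only shows $q_m-\frac12$ is large when $\min(m,n-m)$ is small, which suffices for minimality, not for the monotonicity claim there; the paper needs the Rigollet--Tong monotonicity for small $m$, and for $m$ near $n$ a symmetry reduction, a total-variation Poisson approximation, an Edgeworth expansion of $\P(\Po(m)<m)$, and a separate Poisson monotonicity lemma ($\P(\Po(m)<m)<\P(\Po(m+1)<m+1)$) for bounded $m$. Finally, uniformity of the Edgeworth expansion ``on compact subsets of $(0,1)$'' leaves an uncovered range of width $\Theta(n)$ at each end where neither that expansion nor the Poisson limit applies; the paper closes this by proving a uniform version of Esseen's theorem valid whenever $\sqrt{np(1-p)}\ge\log n$, which shrinks the ad hoc region to $\min(m,n-m)<2\log^2 n$. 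Establishing that uniformity (and the smooth $p$-dependence of the coefficients you invoke) is a substantial part of the work, not a citation one can wave at.
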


\begin{remark}
  By symmetry, \ie, considering $n-\Binm$, it follows that for large $n$ at
  least, the probability $\P(\Binm<m)$ is largest for the integer $m$
  closest to $n/3$.
\end{remark}

\begin{remark}\label{Rp}
For general $p$,
  the value of $\P\bigpar{\Binp\le np}$ is asymptotically given by 
  \begin{align}
\P\bigpar{\Binp\le np}
=
\frac12+\frac{4-2p-6\frax{np}}{6\sqrt{2\pi np(1-p)}}+O\parfrac{1}{np(1-p)},    
  \end{align}
at least provided $np(1-p)\ge \log^2n$; this 
is a consequence of \refT{TE3} and \eqref{q1} (with $k=1$).
Cf.\ \refF{fig1}.
See also the explicit related bound by \citet[Lemma 8]{Doerr}.
\end{remark}

\begin{remark}
  In principle, it should be possible to calculate all constants in our
  proof explicitly, and thus find an explicit value for
  $n_0$;  the conjecture then could be verified completely (assuming that it
  holds) by checking all smaller  $n$ numerically. 
However, we do not believe that this is practical.
Presumably, other methods, completely different from ours, 
are needed to show \Chvatal's conjecture
in general.
(We have verified the conjecture numerically for $n\le1000$.)
\end{remark}

Our proof of \refT{T1} is based on 
the version for integer-valued random variables found by \citet{Esseen}
of the asymptotic Edgeworth  expansion 
for probabilities in the central limit theorem.
This is usually stated for a single probability distribution, but we need to
check that the estimates hold uniformly for $\Binp$ with $p$ in some range;
hence we discuss this expansion in some detail in \refS{SE}.
In particular, we state in \refT{TE3} the result that we need in a general form,
and prove it in \refS{SpfTE3}.
We return to the binomial probabilities in \refS{Sex}, and prove \refT{T1}
in \refS{SpfT1}.

\begin{remark}
  See \citet{BCD} for another aspect, with statistical implications, 
of the oscillations of binomial probabilities; 
they too use   Edgeworth  expansions with more than one term.
\end{remark}

\section{Preliminaries}

\subsection{General notation}
We let  $\frax{x}:=x-\floor{x}$ denote the fractional part of a real number
$x$;
thus $\frax{x}\in[0,1)$.

 $\ddxxx mf$ denotes the $m$th derivative of a function $f$, with
$\ddxxx0f:=f$. 

A random variable $X$ has \emph{span} $d$ if it is concentrated on a set 
$x_0+d\bbZ$ for some real $x_0$, and $d$ is maximal with this property.

$C$ and $c$ denote unimportant constants, in general different at different
occurrences. The ``constants'' may depend on some given parameters, given by
the context;
we sometimes write \eg{} $C_k$ to emphasize that $C$ depends on
a parameter $k$, but this is omitted when not necessary.

\subsection{Special notation}
We introduce here the notation needed for the expansions in the following
sections. 
See further \citet[Chapters III--IV]{Esseen} and \citet[\S VI.1]{Petrov}.

Let $X$ be a random variable.
($X$ will be regarded as given. Most quantities defined below depend on the
distribution of $X$, although we for simplicity do not show this in the
notation.)
Denote its mean by $\mu:=\E X$,
its central absolute moments by $\gb_j:=\E|X-\mu|^j$, 
and its cumulants by $\gam_j$ (when they exist, \ie, when $\E|X|^j<\infty$).
Also, let $\gss=\gb_2=\gam_2=\Var X$ be the variance of $X$, and define
the scale-invariant
\begin{align}
\gL_j&:=\frac{\gb_j}{\gs^j}, \label{a0}
\\
  \gl_j&:=\frac{\gam_j}{\gs^j}.\label{a1}
\end{align}
Each cumulant $\gam_j$, $j\ge2$, can be expressed as a polynomial in 
central moments of orders $2,\dots,j$, and it follows, using also \Holder's
inequality, that
\begin{align}\label{b1}
  |\gam_j|\le C_j\gb_j
\end{align}
and thus
\begin{align}\label{b2}
  |\gl_j|\le C_j\gL_j.
\end{align}
Furthermore, \Holder's inequality also easily yields, for $2\le j\le k$,
\begin{align}\label{b3}
  1=\gL_2 \le \gL_j^{1/(j-2)} \le \gL_k^{1/(k-2)}.
\end{align}

Define polynomials $P_j(u)$, $j\ge1$, by expanding the formal power series
\begin{align}\label{a2}
  \exp\lrpar{\sumi \frac{\gl_{i+2}}{(i+2)!}u^{i+2}z^i}
=1+\sumj P_j(u)z^j.
\end{align}
Note that $P_j(u)$ is a polynomial of degree $3j$; moreover,
\begin{align}\label{a3}
  P_j(u)=\sum_{r=j+2}^{3j} \pox_{jr}u^r,
\end{align}
where 
each coefficient $\pox_{jr}$ is a polynomial in $\gl_k$,
$k=3,\dots,j+2$.
In particular, $P_j(u)$ is well-defined provided $\E|X|^{j+2}<\infty$.
Furthermore, 
$\pox_{jr}=0$ unless $r-j$ is even.

Let $\Phi(x)$ be the distribution function of the standard normal
distribution,
so that 
\begin{align}\label{a4}
  \ddx\Phi(x)=\phi(x):=\frac{1}{\sqrt{2\pi}}e^{-x^2/2}.
\end{align}
Define $Q_j(x)$ as the function obtained from $P_j(u)$ by replacing
each power $u^r$ by $(-1)^r \ddxxx{r}\Phi(x)=(-1)^r \ddxx{r}\Phi(x)$,
\ie,
\begin{align}\label{a5a}
  Q_j(x)&:=\sum_{r=j+2}^{3j} (-1)^r\pox_{jr}\ddxxx{r}\Phi(x)
=\sum_{r=j+2}^{3j} (-1)^r\pox_{jr}\ddxxx{r-1}\phi(x)
\\\phantom{:} \label{a5b}
&=-\phi(x)\sum_{r=j+2}^{3j} \pox_{jr} H_{r-1}(x),
\end{align}
where $H_r(x)$ are the Hermite polynomials
(in the normalization natural in probability theory, \ie, orthogonal w.r.t.\
the standard normal distruibution); see \eg{}
\cite[(18.5.9)]{NIST} (there denoted $He_r(x)$)
or \cite[p.~127]{Petrov}.

Define also
periodic functions $\psi_r$, $r\ge1$, by their Fourier series
\begin{align}\label{psi}
  \psi_r(x) 
= \sum_{\substack{k=-\infty\\k\neq 0}}^\infty \frac{e^{2\pi k x \ii}}{(2\pi k \ii)^r}.
\end{align}
Note that for $r\ge2$, the series \eqref{psi} converges absolutely and
defines a continuous periodic function with period 1.
However, for $r=1$, the series is only conditionally convergent; in fact it
is the Fourier series of $\frac12-\frax{x}$.
(It follows from standard results that the series converges for every
$x$, but we do not really need this.) 
Hence, $\psi_1(x)$ has a jump 1 at every integer.
For later convenience, we redefine
$\psi_1$ to be right-continuous; thus we define
\begin{align}\label{psi1}
  \psi_1(x):=\frac12-\frax{x}=\frac12-x+\floor{x}, \qquad x\in\bbR,
\end{align}
noting that for $r=1$, \eqref{psi} holds only for non-integer $x$.
Note also that, for any $r\ge1$ and $x\in\bbR$,
\begin{align}\label{psib}
  \psi_r(x) =- \frac{1}{r!} B_r(\frax{x}),
\end{align}
where $B_r(x)$ denotes the Bernoulli polynomials, see \eg{}
\cite[(24.8.3)]{NIST}.
In particular, \cite[(24.2.4)]{NIST},
\begin{align}\label{ber}
  \psi_r(0) =- \frac{1}{r!} B_r(0)
=- \frac{1}{r!} B_r,
\end{align}
where $B_r$ denotes the Bernoulli numbers.
Recall that $B_1=-\frac12$, $B_2=\frac{1}6$ and $B_{2j+1}=0$ for $j\ge1$.

\begin{remark}
Note that $\psi_r(x) = (-1)^{r-1}h_r Q_r(x)$ (where $h_r=\pm1$)
in the notation of
\cite[p.~60--61]{Esseen}.
We prefer the choice of signs in our definition, but this is only a matter
of taste.   
\end{remark}

\section{The basic expansion theorem}\label{SE}

\citet[Theorem IV.4]{Esseen} proved
the following.
More precisely, 
Esseen's result is \eqref{ra};
the version \eqref{rb} follows immediately by applying \eqref{ra} with $k+1$
instead of $k$. (We prefer this, weaker, version for our generalization in
\refT{TE3}.) 

\begin{theorem}[\citet{Esseen}]\label{TE1}
  Let $X,X_1,X_2,\dots$ be \iid{} integer valued
random variables with span $1$ and let $S_n:=\sumin X_i$.
Let $k\ge1$ be an integer and suppose that $\E|X|^{k+2}<\infty$.
With notations as above, define
\begin{align}\label{hnk}
\HH_{n,k}(x)
:= \Phi(x) + \sum_{j=1}^k n^{-j/2}Q_j(x)
\end{align}
and
\begin{align}\label{cge}
\HHH_{n,k}(x)
:= 
\HH_{n,k}(x)+\sum_{\ell=1}^k(-1)^{\ell-1}n^{-\ell/2}\gs^{-\ell}
  \psi_\ell\bigpar{n\mu+x\gs\sqrt n}\ddxxx{\ell}{\HH_{n,k}}(x)
.\end{align}
Then
\begin{align}\label{esseen}
&  \P\bigpar{S_n\le n\mu + x\gs\sqrt n}
= \HHH_{n,k}(x)
+ R_{n,k}(x)
,\end{align}
where, as \ntoo,
\begin{align}\label{ra}
  \sup_x|R_{n,k}(x)| = o\bigpar{n^{-k/2}}.
\end{align}

If further $\E |X|^{k+3}<\infty$, then 
\begin{align}\label{rb}
  \sup_x|R_{n,k}(x)| = O\bigpar{n^{-\frac{k+1}{2}}}.
\end{align}
\end{theorem}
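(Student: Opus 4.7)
The plan is to follow Esseen's classical characteristic-function approach. First, \eqref{rb} is immediate from \eqref{ra}: apply \eqref{ra} with $k+1$ in place of $k$ and observe that the extra polynomial term $n^{-(k+1)/2} Q_{k+1}(x)$ (bounded, since $Q_{k+1} = -\phi\cdot\mathrm{poly}$) and the extra lattice correction at $\ell = k+1$ (with $\psi_{k+1}$ bounded) are each $O(n^{-(k+1)/2})$ uniformly in $x$, so they can be absorbed into the error term. Hence only \eqref{ra} needs real work.

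Let $f(t) := \E[e^{itX}]$; the span-$1$ hypothesis gives $f$ period $2\pi$, $|f(t)| < 1$ on $(0, 2\pi)$, and $\sup_{\delta \le |t| \le \pi} |f(t)| < 1$ for every $\delta > 0$. By Fourier inversion, for integer $N$,
$$\P(S_n = N) = \frac{1}{2\pi}\int_{-\pi}^\pi e^{-iNt} f(t)^n\,dt.$$
Substituting $t = u/(\sigma\sqrt n)$ and Taylor-expanding $n\log f(t)$ to order $k+2$ using $\E|X|^{k+2} < \infty$ yields, uniformly for $|u| \le \eps\sqrt n$,
$$f\bigpar{u/(\sigma\sqrt n)}^n = e^{i\sqrt n\,\mu u/\sigma}\, e^{-u^2/2}\Bigpar{1 + \sum_{j=1}^{k} n^{-j/2} P_j(iu) + r_k(u,n)},$$
with $P_j$ given by \eqref{a2} and a remainder $r_k$. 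Fourier-inverting $(iu)^r e^{-u^2/2}$ produces $(-1)^r \ddxxx{r}{\phi}(x)$, which by \eqref{a5a}--\eqref{a5b} identifies the density-level approximation with $\HH'_{n,k}(x)$, whose antiderivative is $\HH_{n,k}$.

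To pass from the pointwise $\P(S_n = N)$ to $\P(S_n \le y)$ with $y = n\mu + x\sigma\sqrt n$, I would sum over integer $N \le y$ and invoke the Euler--Maclaurin formula: for a smooth rapidly decaying $g$ and $y\in\bbR$,
$$\sum_{N \le y} g(N) = \int_{-\infty}^y g(s)\,ds + \sum_{\ell=1}^{k} (-1)^{\ell-1}\psi_\ell(y)\,\ddxxx{\ell-1}{g}(y) + \text{remainder},$$
where $\psi_\ell$ is the periodic function in \eqref{psi}--\eqref{psib}; this is exactly how Poisson summation on $\bbZ$ produces the Fourier series \eqref{psi}, and is the structural reason the very same functions appear in the lattice correction. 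Applied with $g = \HH'_{n,k}$ (suitably rescaled) and using the chain rule under $y = n\mu + x\sigma\sqrt n$, the boundary terms generate the factors $n^{-\ell/2}\sigma^{-\ell}$ and $\psi_\ell(n\mu + x\sigma\sqrt n)\,\ddxxx{\ell}{\HH_{n,k}}(x)$, reproducing the correction sum in \eqref{cge} exactly.

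The main obstacle is showing the cumulative error is $o(n^{-k/2})$ \emph{uniformly in} $x$. One splits the $t$-integral into $|t| \le \eps$ and $\eps < |t| \le \pi$; the outer range is controlled because $|f(t)|^n$ is exponentially small (via span-$1$), while on the inner range one must carefully bound the cumulant remainder $r_k(u,n)$, integrate against $e^{-u^2/2}$, and check that the bound is uniform in $x$. Upgrading $O(n^{-k/2})$ to $o(n^{-k/2})$ under only $\E|X|^{k+2} < \infty$ (rather than $\E|X|^{k+3} < \infty$, which would give the $O$-bound at once) requires applying the Riemann--Lebesgue lemma to the $(k+2)$-nd term of the cumulant expansion: its uniform integrability yields $o(n^{-k/2})$ without an explicit rate.
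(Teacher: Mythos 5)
The only part of this theorem that the paper itself argues is the deduction of \eqref{rb} from \eqref{ra}: the estimate \eqref{ra} is quoted from \citet[Theorem IV.4]{Esseen}, and the paper merely notes that \eqref{rb} follows by applying \eqref{ra} with $k+1$ in place of $k$. Your opening paragraph carries out exactly this reduction, in agreement with the paper (you omit that the correction terms for $\ell\le k$ also change, since $\ddxxx{\ell}{\HH_{n,k+1}}$ replaces $\ddxxx{\ell}{\HH_{n,k}}$, but those differences are $O\bigpar{n^{-(k+1+\ell)/2}}$ for a fixed distribution and are harmless).

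Your sketch of \eqref{ra} itself, however, takes a different route from Esseen's (the route the paper follows when proving \refT{TE3}): Esseen estimates the distribution function directly by Fourier analysis, and the functions $\psi_\ell$ arise from the contributions of the periodic characteristic function near the lattice frequencies $2\pi j\gs\sqrt n$, $j\neq0$ (these are the terms the paper controls in \eqref{cgs}--\eqref{cgu}); you instead propose a local expansion of $\P(S_n=N)$ followed by summation over $N\le y$ and Euler--Maclaurin applied to the approximating density. Your structural identification is right: the boundary terms $(-1)^{\ell-1}\psi_\ell(y)\ddxxx{\ell-1}{g}(y)$ with $g$ the rescaled density do reproduce the factors $n^{-\ell/2}\gs^{-\ell}\psi_\ell\bigpar{n\mu+x\gs\sqrt n}\ddxxx{\ell}{\HH_{n,k}}(x)$ in \eqref{cge}. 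The genuine gap is the local-to-global step. A local error bound that is merely uniform in $N$ and of size $o\bigpar{n^{-(k+1)/2}}$ cannot be summed over $N\le y$: there are infinitely many terms, and already the roughly $\gs\sqrt n$ lattice points carrying the bulk of the mass would cost a factor of order $\sqrt n$, destroying the claimed $o\bigpar{n^{-k/2}}$. One needs a non-uniform local estimate with summable decay in $x_N:=(N-n\mu)/(\gs\sqrt n)$, for instance $o\bigpar{n^{-(k+1)/2}}(1+|x_N|)^{-(k+2)}$, and nothing in your error discussion (splitting a single inversion integral in $t$ and bounding the cumulant remainder uniformly) produces such decay; alternatively one performs the sum over $N$ exactly inside the inversion integral, which is precisely how Esseen's argument generates the shifted-frequency integrals. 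A further small point: Euler--Maclaurin must be carried one step beyond $\ell=k$ so that the integral remainder is $o\bigpar{n^{-k/2}}$ rather than only $O\bigpar{n^{-k/2}}$. Finally, in the span-$1$ lattice case the $o$-rate under $\E|X|^{k+2}<\infty$ comes from a Peano-remainder/truncation (uniform integrability) estimate of the order-$(k+2)$ term of $\log f$ near $t=0$ together with $\sup_{\gd\le|t|\le\pi}|f(t)|<1$; the Riemann--Lebesgue lemma you invoke is not applicable to a periodic characteristic function, though the uniform-integrability idea behind your phrasing is the correct mechanism.
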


\refT{TE1} is stated for a single distribution.
We want to apply it to $X\sim\Be(p)$, but then need some uniform estimates for
all $p$, or at least for a large range.
It is no surprise that the proof of \refT{TE1} yields such uniformity under
suitable conditions, including some uniform moment estimates. 
For $\Be(p)$, 
the case $p\in[p_1,p_2]$ for a compact interval $[p_1,p_2]\subset(0,1)$
does not cause any difficulties, but we can go beyond that.
We will show the following extension of \refT{TE1} in \refS{SpfTE3}.

\begin{theorem}\label{TE3}
Let $k\ge1$.  
For every $c>0$, there exists a constant $C$ (depending on $k$ and $c$ only)
such that  if $X$ is an integer-valued random variable with
$\E|X|^{k+3}<\infty$ and
\begin{align}\label{te3a}
  \min\bigset{\P(X=0),\P(X=1)}&
\ge c \gss,
\end{align}
and  $n\ge2$ is an integer with
\begin{align}\label{te3bad}
\gs\sqrt n\ge \log n,  
\end{align}
then
\eqref{esseen} holds with
\begin{align}\label{te3b}
\sup_x
 |R_{n,k}(x)| 
\le C \gL_{k+3}n^{-\xki}
\le C  \frac{\E|X|^{k+3}}{\gs^{k+3}} n^{-\xki}
.\end{align}
\end{theorem}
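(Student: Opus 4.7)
\emph{Approach.} I would follow the derivation of \citet[Theorem~IV.4]{Esseen} of \refT{TE1}, tracking how every constant depends on the distribution of $X$. The essential new input is a uniform upper bound on the characteristic function $\varphi(u):=\E e^{\ii uX}$: under \eqref{te3a},
\begin{align}\label{key}
|\varphi(u)|^2\le 1-c_1\gs^2\sin^2(u/2)\qquad(|u|\le\pi)
\end{align}
for some $c_1=c_1(c)>0$. The linear-in-$\gs^2$ dependence on the right (rather than the cruder $\gs^4$ one gets from $\P(X=0)\P(X=1)\ge c^2\gs^4$) is what makes the final error \eqref{te3b} depend on $\gs$ only through the $\gL_{k+3}$ prefactor.

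\emph{Deriving \eqref{key}.} Hypothesis \eqref{te3a} forces $\mu=\E X$ to be bounded in terms of $c$: from $\gs^2\ge\P(X=0)\mu^2+\P(X=1)(1-\mu)^2\ge c\gs^2\bigpar{\mu^2+(1-\mu)^2}$ one gets $(\mu-\tfrac12)^2\le\tfrac1{2c}-\tfrac14$. Let $k_0\in\bbZ$ be the integer nearest to $\mu$; then $|k_0|$ is bounded by a function of $c$. For any integer $k\ne k_0$ one has $|k-\mu|\ge\tfrac12$, so $\gs^2\ge\sum_{k\ne k_0}\P(X=k)(k-\mu)^2\ge\tfrac14\bigpar{1-\P(X=k_0)}$, whence $\P(X=k_0)\ge 1-4\gs^2$. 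When $\gs^2\le 1/8$ this gives $\P(X=k_0)\ge\tfrac12$; combined with $\P(X=0),\P(X=1)\ge c\gs^2$, the identity
\begin{align*}
1-|\varphi(u)|^2=2\sum_{j<k}\P(X=j)\P(X=k)\bigpar{1-\cos u(k-j)}
\end{align*}
applied to the pairs $(0,k_0)$ and $(1,k_0)$ (which collapse to just $(0,1)$ when $k_0\in\set{0,1}$) gives $1-|\varphi(u)|^2\ge c\gs^2\bigsqpar{(1-\cos uk_0)+(1-\cos u(k_0-1))}$. The identity $\sin^2A+\sin^2B=1-\cos(A+B)\cos(A-B)$, applied with $A=uk_0/2$, $B=u(k_0-1)/2$, together with $\cos(u/2)\ge 0$ on $[0,\pi]$, shows this bracket is at least $\sin^2(u/2)$, yielding \eqref{key}. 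When $\gs^2>1/8$, $\gs$ is bounded below and the crude $\P(X=0)\P(X=1)\ge c^2\gs^4\ge(c^2/8)\gs^2$ inserted into $1-|\varphi(u)|^2\ge 4\P(X=0)\P(X=1)\sin^2(u/2)$ suffices.

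\emph{Completion.} With \eqref{key} in hand, the argument mirrors Esseen's. Writing the left-hand side of \eqref{esseen} minus $\HHH_{n,k}(x)$ as a Fourier integral over $|t|\le\pi\gs\sqrt n$ (the natural range for integer-valued $X$), I split at $|t|=u_0\gs\sqrt n$ for a small constant $u_0>0$. For $|t|\le u_0\gs\sqrt n$, Taylor-expanding $\log\varphi(t/(\gs\sqrt n))$ to order $k+3$ reconstructs the Edgeworth polynomial part of $\HHH_{n,k}$, with Taylor remainder bounded by $C\gL_{k+3}|t|^{k+3}n^{-(k+3)/2}e^{-t^2/4}$; integrating against the $|t|^{-1}$-weighted Fourier kernel contributes $O\bigpar{\gL_{k+3}n^{-(k+1)/2}}$. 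For $u_0\gs\sqrt n\le|t|\le\pi\gs\sqrt n$, \eqref{key} together with $\sin^2(t/(2\gs\sqrt n))\ge\sin^2(u_0/2)$ and \eqref{te3bad} yields $|\varphi(t/(\gs\sqrt n))|^n\le\exp(-c_2 n\gs^2)\le\exp(-c_2(\log n)^2)$, negligible on any polynomial scale. The periodic corrections $\psi_\ell$ that distinguish $\HHH_{n,k}$ from $\HH_{n,k}$ arise from the difference between Fourier inversion on $\bbZ$ and on $\bbR$ (equivalently, from Poisson summation) and are handled exactly as in \cite{Esseen}. The chief obstacle throughout is the uniform estimate \eqref{key}; once in place, the remainder is careful bookkeeping.
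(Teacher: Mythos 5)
Your overall strategy is the same as the paper's: uniformize Esseen's Fourier-analytic proof, with a lower bound on $1-|\E e^{\ii uX}|$ that is \emph{linear} in $\gss$ as the key uniform input. Your derivation of that bound is correct, though roundabout; the paper's \refL{L2} gets an equivalent estimate in two lines by observing that under \eqref{te3a} the function $\E e^{\ii tX}-a-ae^{\ii t}$, $a=c\gss$, is the Fourier transform of a positive measure of mass $1-2a$, whence $|\E e^{\ii tX}|\le 2a|\cos(t/2)|+1-2a=1-4a\sin^2(t/4)$ for $|t|\le\pi$. The first problem is your split of the Fourier integral at $|t|=u_0\gs\sqrt n$. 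The Edgeworth approximation of $f_n(t)$ with remainder $C\gL_{k+3}|t|^{k+3}n^{-(k+1)/2}e^{-t^2/4}$ (your exponent $n^{-(k+3)/2}$ is off by a factor $n$, but that is cosmetic) is only valid for $|t|\lesssim T_1:=\sqrt n/\gL_{k+3}^{1/(k+1)}$, as in \cite[Lemma VI.4]{Petrov}. Nothing in \eqref{te3a} bounds $\gL_{k+3}\gs^{k+1}$ — an integer-valued $X$ satisfying \eqref{te3a} can carry a small mass at a huge integer — so $T_1$ can be far smaller than $u_0\gs\sqrt n$, and on the intermediate range $T_1\le|t|\le u_0\gs\sqrt n$ your claimed bound is genuinely false: there $|f_n(t)|$ is only guaranteed to be $\le e^{-c't^2}$ with $c'$ proportional to $c$, which can exceed $e^{-t^2/4}$ times any polynomial. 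The repair is to split at $T_1$ and use your characteristic-function bound on all of $[T_1,\pi\gs\sqrt n]$, noting $e^{-cT_1^2}\le CT_1^{-(k+1)}=C\gL_{k+3}n^{-(k+1)/2}$ after reducing (as one may) to the case $\gL_{k+3}n^{-(k+1)/2}\le1$; this is exactly what the paper does, and it uses \eqref{te3bad} nowhere.

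The second, and more substantial, gap is that the lattice corrections are not ``handled exactly as in \cite{Esseen}'' for free: Esseen's treatment of them yields constants depending on the fixed distribution, and making that step uniform is precisely where the hypothesis \eqref{te3bad} is consumed — not in the tail estimate $e^{-c_2n\gss}$, to which you attribute it. In the paper's proof the discrete inversion produces, for every nonzero integer $j$ with $(2|j|-1)\pi\gs\sqrt n<n^{(k+1)/2}$, a term
\begin{align*}
\int_{-\pi\gs\sqrt n}^{\pi\gs\sqrt n}\frac{|f_n(t)-g_0(t)|}{|t+2\pi j\gs\sqrt n|}\dd t
\le \frac{C}{|j|\gs\sqrt n}\,\gL_{k+3}n^{-(k+1)/2},
\end{align*}
which requires an unweighted $L^1$ bound on $|f_n-g_0|$ over $[-\pi\gs\sqrt n,\pi\gs\sqrt n]$ (you only mention the $|t|^{-1}$-weighted one); summing the harmonic series over the roughly $n^{(k+1)/2}/(\gs\sqrt n)$ admissible $j$ gives a factor $C\log n/(\gs\sqrt n)$, and it is exactly here that $\gs\sqrt n\ge\log n$ is needed to recover the bound $C\gL_{k+3}n^{-(k+1)/2}$ of \eqref{te3b}. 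Your sketch leaves this — the one step of Esseen's argument whose uniformity is delicate, and the reason the theorem carries hypothesis \eqref{te3bad} at all — unexamined, so as written the proposal does not yet prove the stated estimate.
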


\begin{remark}
  We defined $\psi_1(x)$ to be right-continuous so that $\HHH_{n,k}(x)$ and
  $R_{n,k}(x)$ are right-continuous, which enables \eqref{ra}, \eqref{rb} and
  \eqref{te3b} to hold for all $x$, including integers.
(All functions in \eqref{hnk}--\eqref{cge} except $\psi_1$ are continuous.)
\end{remark}

\begin{remark}\label{Rsimpler}
As remarked by \citet[p.~61]{Esseen}, \eqref{cge} contains
redundant terms. The form \eqref{cge} is sometimes convenient (for example in
the proof), but 
it is often more convenient to modify \eqref{cge} 
by dropping the redundant terms; we
thus define also
\begin{align}\label{tcge}
\tHHH_{n,k}(x)
:= 
\HH_{n,k}(x)+\sum_{\ell=1}^k(-1)^{\ell-1}n^{-\ell/2}\gs^{-\ell}
  \psi_\ell\bigpar{n\mu+x\gs\sqrt n}\ddxxx{\ell}{\HH_{n,k-\ell}}(x)
\end{align}
and define a modified remainder term $\tR_{n,k}(x)$ by
\begin{align}\label{tesseen}
&  \P\bigpar{S_n\le n\mu + x\gs\sqrt n}
= \tHHH_{n,k}(x)
+ \tR_{n,k}(x)
.\end{align}
It follows from \eqref{hnk} that in \refT{TE1},
this  changes
$\HHH_{n,k}(x)$ and thus $R_{n,k}(x)$ by some terms which are $O(n^{-m/2})$
for some $m\ge k+1$ so \eqref{ra} and \eqref{rb} still hold for
$\tR_{n,k}(x)$.
With only a little more effort, it can be verified that the same holds for
\refT{TE3}; it follows from \refL{L1} below that
the removed terms are all dominated by $\gL_{k+3}n^{-\xki}$, so \eqref{te3b}
still holds. 
(This uses also that  $\gs\sqrt n$ is bounded below by the assumptions,
and the fact 
that $\gb_i\le C_{i,j}\gb_j$ when $2\le i\le j$ and $X$ is integer valued.)
\end{remark}

\begin{remark}
  \label{Rspan}
Note that \refT{T1} is only for random variables with span 1, and that a
uniform version for a family of random variables therefore requires some
uniform condition preventing the variables from being too close to variables
with larger span. We use condition \eqref{te3a}  which is convenient
and turns out to be sufficient; it can obviously be replaced by more general
conditions. 
\end{remark}

\begin{remark}\label{Rannoy}
The assumption \eqref{te3bad} in \refT{TE3} is 
annoying but not a very serious restriction.
Note that the \rhs{} of \eqref{te3b} is, since $X$ is
integer-valued, at least $C(\gs\sqrt n)^{-k-1}$. Hence, if \eqref{te3bad}
is violated, \eqref{te3b} would, even if true, only give a weak bound.
We do not know whether \eqref{te3bad} really is needed.
It is possible that \refT{TE3} could be proved without this assumption,
using the alternative method of proof in \cite[Section IV.4]{Esseen}, but we
have not pursued this. 
\end{remark}

\section{Proof of \refT{TE3}}\label{SpfTE3}

\begin{lemma}\label{L1}
  Suppose that $\ell\ge1$ and\/ $\E|X|^{\ell+2}<\infty$.
Then, for every $m\ge0$, 
\begin{align}
  |\pox_{\ell,r}|&\le C_\ell \gL_{\ell+2},\label{l1a}
\\\label{l1p}
|P_{\ell}(u)|&\le C_\ell \gL_{\ell+2} \bigpar{|u|^{\ell+2}+|u|^{3\ell}},
\\\label{l1q}
|Q_\ell\mm(x)|&\le C_{\ell,m} \gL_{\ell+2},
\\\label{lih}
|\HH_{n,\ell}\mm(x)|&\le C_{\ell,m} \bigpar{1+\gL_{\ell+2}n^{-\ell/2}}
.\end{align}
\end{lemma}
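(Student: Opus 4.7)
The plan is to prove the four bounds in order, since each feeds into the next, with the only real work being the first.

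For (l1a), I would expand the generating identity (a2). Multinomial expansion of the exponential writes $\varpi_{\ell,r}$ as a finite linear combination (with coefficients depending only on $\ell$) of monomials $\prod_{t=1}^{s} \lambda_{i_t+2}$ taken over all sequences $(i_1,\dots,i_s)$ of positive integers with $i_1+\cdots+i_s=\ell$; the index $r$ equals $\sum_t(i_t+2)=\ell+2s$, which in particular forces $r-\ell$ to be even. Bound each factor by (b2): $|\lambda_{i+2}|\le C\,\Lambda_{i+2}$. Then invoke (b3) in the form $\Lambda_{i+2}\le\Lambda_{\ell+2}^{i/\ell}$ (which holds because $\Lambda_{i+2}^{1/i}\le\Lambda_{\ell+2}^{1/\ell}$ for $i\le\ell$). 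Consequently $\prod_t\Lambda_{i_t+2}\le\Lambda_{\ell+2}^{\sum_t i_t/\ell}=\Lambda_{\ell+2}$, yielding $|\varpi_{\ell,r}|\le C_\ell\,\Lambda_{\ell+2}$. This is the main step; everything else is mechanical.

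For (l1p), sum the trivial estimate $|P_\ell(u)|\le\sum_{r=\ell+2}^{3\ell}|\varpi_{\ell,r}|\,|u|^r$ and apply (l1a); since each $|u|^r$ with $\ell+2\le r\le 3\ell$ is at most $|u|^{\ell+2}+|u|^{3\ell}$ (handling $|u|\le1$ and $|u|\ge1$ separately), the claimed bound follows.

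For (l1q), use the representation (a5b): $Q_\ell(x)=-\phi(x)\sum_{r=\ell+2}^{3\ell}\varpi_{\ell,r}H_{r-1}(x)$. Differentiating $m$ times yields a finite sum of terms of the form $\varpi_{\ell,r}\cdot\phi(x)\cdot(\text{fixed polynomial in }x)$, since derivatives of $\phi$ times a polynomial are again of this shape. Each such product is bounded in $x$ by a constant depending only on $\ell$ and $m$, so combined with (l1a) we get $|Q_\ell^{(m)}(x)|\le C_{\ell,m}\Lambda_{\ell+2}$.

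For (lih), differentiate (hnk) term by term: $|\HH_{n,\ell}^{(m)}(x)|\le|\Phi^{(m)}(x)|+\sum_{j=1}^{\ell}n^{-j/2}|Q_j^{(m)}(x)|\le C_m+C_{\ell,m}\sum_{j=1}^{\ell}n^{-j/2}\Lambda_{j+2}$, using (l1q). To collapse the sum, write $t:=n^{-\ell/2}\Lambda_{\ell+2}$; by (b3) applied as above, $n^{-j/2}\Lambda_{j+2}\le(n^{-\ell/2}\Lambda_{\ell+2})^{j/\ell}=t^{j/\ell}$. Splitting into the cases $t\le1$ and $t\ge1$ gives $\sum_{j=1}^{\ell}t^{j/\ell}\le C_\ell(1+t)$, which is exactly the claimed $C_{\ell,m}(1+\Lambda_{\ell+2}n^{-\ell/2})$. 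The only conceptually nontrivial point in the whole lemma is the combinatorial/Hölder argument that collapses arbitrary products of lower $\Lambda_j$'s into a single $\Lambda_{\ell+2}$, which gets reused in step (lih).
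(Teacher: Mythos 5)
Your proof is correct and follows essentially the same route as the paper: the multinomial expansion of \eqref{a2} combined with \eqref{b2} and the \Holder{} bound $\gL_{i+2}\le\gL_{\ell+2}^{i/\ell}$ from \eqref{b3} to get \eqref{l1a}, then boundedness of $\phi$ times polynomials for \eqref{l1q}, and the same collapsing of $\sum_j n^{-j/2}\gL_{j+2}$ via \eqref{b3} for \eqref{lih}. The only difference is that you spell out a few routine steps (the $|u|^{\ell+2}+|u|^{3\ell}$ split and the $t\le1$/$t\ge1$ case distinction) that the paper leaves implicit.
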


\begin{proof}
  It follows from \eqref{a2} that $\pox_{\ell,r}$ is a linear combination of
  products $\prod_{k=1}^m\gl_{i_k+2}$ with $\sum_k(i_k+2)=r$ and $\sum_k
  i_k=\ell$. 
By \eqref{b2} and \eqref{b3}, each such product is bounded by
\begin{align}
  C_\ell\prod_{k=1}^m\gL_{i_k+2}
\le   C_\ell\prod_{k=1}^m\gL_{\ell+2}^{i_k/\ell}
=C_\ell \gL_{\ell+2},
\end{align}
which yields \eqref{l1a}.
This implies \eqref{l1p} and \eqref{l1q} by \eqref{a3} and \eqref{a5a},
noting that $\Phi(x)$ and all its derivatives are bounded on $\bbR$.
Finally, \eqref{hnk} and \eqref{l1q} together with \eqref{b3} yield
\begin{align}
  |\HH_{n,\ell}\mm(x)|&
\le C_m + C_{\ell,m}\sum_{j=1}^\ell n^{-j/2}\gL_{j+2}
\le  C_{\ell,m}\sum_{j=0}^\ell n^{-j/2}\gL_{\ell+2}^{j/\ell}
\notag\\&
\le  C_{\ell,m}\bigpar{1+ n^{-\ell/2}\gL_{\ell+2}}.
\end{align}
\end{proof}

\begin{lemma}\label{L2}
  If\/ $X$ is a random variable with 
$\P(X=0)\ge a$ and $\P(X=1)\ge a$ for some $a\ge0$,
  then
  \begin{align}\label{l2}
    \bigabs{\E e^{\ii t X}} \le 1-\frac{a}{\pi^2}t^2 \le e^{-a\pi\qww t^2},
\qquad |t|\le\pi.
  \end{align}
\end{lemma}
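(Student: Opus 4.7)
The approach is to decompose the characteristic function $\gf(t):=\E e^{\ii tX}$ using the atoms hypothesized at $0$ and $1$. First I would write
\begin{equation*}
\gf(t) = a + a e^{\ii t} + \rho(t),
\end{equation*}
where $\rho(t):=\E\bigsqpar{e^{\ii tX}\indic{X\ne 0,1}} + \bigpar{\P(X=0)-a} + \bigpar{\P(X=1)-a}e^{\ii t}$ collects the remaining mass. The hypotheses force $\P(X=0)+\P(X=1)\le 1$ and hence $2a\le 1$, and the triangle inequality gives
\begin{equation*}
|\rho(t)|\le \bigpar{1-\P(X=0)-\P(X=1)}+\bigpar{\P(X=0)-a}+\bigpar{\P(X=1)-a}=1-2a.
\end{equation*}

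Next, I would estimate the main piece by the half-angle identity $|1+e^{\ii t}|^2 = 2+2\cos t = 4\cos^2(t/2)$, giving $|a+ae^{\ii t}| = 2a|\cos(t/2)|$. For $|t|\le \pi$ we have $t/2\in[-\pi/2,\pi/2]$, so $\cos(t/2)\ge 0$; combining with the bound on $\rho$ yields
\begin{equation*}
|\gf(t)|\le 2a\cos(t/2) + (1-2a) = 1-2a\bigpar{1-\cos(t/2)}.
\end{equation*}

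To finish I would apply the elementary inequality $1-\cos u\ge 2u^2/\pi^2$ valid for $|u|\le \pi/2$ (an immediate consequence of $1-\cos u = 2\sin^2(u/2)$ combined with Jordan's inequality $|\sin v|\ge 2|v|/\pi$ for $|v|\le \pi/2$) with $u=t/2$. This produces $2a\bigpar{1-\cos(t/2)}\ge at^2/\pi^2$, which is the first inequality of \eqref{l2}. The second inequality is just $1-y\le e^{-y}$ for $y\ge 0$, applied with $y=a\pi^{-2}t^2$. The whole argument is short and elementary; the only real idea is the decomposition, and there is no serious obstacle.
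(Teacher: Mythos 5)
Your proof is correct and follows essentially the same route as the paper: the paper likewise writes $\E e^{\ii tX}-a-ae^{\ii t}$ as the transform of a positive measure of mass $1-2a$ (your $\rho(t)$ bound), reduces to $1-2a\bigl(1-\cos\frac t2\bigr)=1-4a\sin^2\frac t4$, and then concludes via the same elementary (Jordan-type) inequality and $1-y\le e^{-y}$. No gaps.
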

 \begin{proof}
The assumption implies that $\E e^{\ii tX}-a-ae^{\ii t}$ is the Fourier
    transform of a positive measure with mass $1-2a$. Hence,
if $|t|\le\pi$,
\begin{align}\label{l2a}
    \bigabs{\E e^{\ii t X}} 
&\le 
\bigabs{a+ae^{\ii t}} + \bigabs{\E e^{\ii tX}-a-ae^{\ii t}}
\le
a\bigabs{1+e^{\ii t}}+1-2a
\notag\\&
=2a\bigabs{\cos\frac{t}{2}}+1-2a
=1-2a\bigpar{1-\cos\frac{t}{2}}
=1-4a\sin^2\frac{t}4
,\end{align}
and \eqref{l2} follows.
\end{proof}

\begin{proof}[Proof of \refT{TE3}]
  We follow the proofs of Theorems 3 and 4 in \citet{Esseen}
(with $d=1$ and thus $t_0=2\pi$)
and mention only the main differences.
Note that \cite{Esseen} considers centred variables, so $X_i$ there
is our $X_i-\mu$.
Let 
\begin{align}\label{pf}
f(t)&:=\E e^{\ii t(X-\mu)},
\\\label{fn}
  f_n(t)&:=\E e^{\ii t (S_n-n\mu)/(\gs\sqrt n)}=f^n\bigpar{t/(\gs\sqrt n)},
\\\label{g0}
g_0(t)&:=e^{-t^2/2}+\sum_{j=1}^k n^{-j/2} P_j(\ii t) e^{-t^2/2}.
\end{align}
Also, let $T:=n^{\xki}$ and replace $T_{3n}$ in \cite{Esseen} by 
\begin{align}\label{Tk}
  \Tk:=\sqrt{n}/ \gL_{k+3}^{1/(k+1)}.
\end{align}

The second inequality in \eqref{te3b} is trivial, by the definition
\eqref{a0}.
We may also assume that
$\gL_{k+3}n^{-\xki}\le 1$, and thus $\Tk\ge1$, since otherwise it follows
from \refL{L1} and \eqref{b3} that each term in \eqref{cge}
is bounded by $C \gL_{k+3}n^{-\xki}$, and thus \eqref{te3b} holds
trivially.

In the range $|t|\le T_1$, we have 
\begin{align}\label{cg}
  |f_n(t)-g_0(t)|\le C \gL_{k+3}|t|^{k+3} e^{-t^2/13} n^{-\xki}
\end{align}
by \cite[Lemma VI.4]{Petrov} with $s=k+3$ and $m=0$ 
(which essentially is \cite[Lemma IV.2a]{Esseen} with $T_{3n}$ improved to
our $\Tk/4$,  which can be proved in the same way). 
Hence, for the ``main term'' in the estimate 
\begin{align}\label{cgi}
\int_{-\Tk}^{\Tk} \frac{|f_n(t)-g_0(t)|}{|t|}\dd t\le C \gL_{k+3}n^{-\xki}.
\end{align}
Furthermore, if $| t|\le \pi\gs \sqrt n$,
then the assumption \eqref{te3a} and \refL{L2} yield
\begin{align}\label{mar}
  |f_n(t)|
= \bigabs{f\bigpar{t/(\gs\sqrt n)}}^n
\le 
e^{-nc\gss (t/\gs\sqrt n)^2}=e^{-ct^2}.
\end{align}
Hence,
\begin{align}\label{mai}
  \int_{\Tk}^{\pi\gs\sqrt n} \frac{|f_n(t)|}{|t|}\dd t
\le   \int_{\Tk}^{\pi\gs\sqrt n} {|f_n(t)|}\dd t
\le C e^{-c\Tk^2}
\le C \Tk^{-(k+1)}
=
C \gL_{k+3}n^{-\xki}.
\end{align}
The integral $  \int_{\Tk}^{\pi\gs\sqrt n} \xfrac{|g_0(t)|}{|t|}\dd t$ has the
same estimate.
Consequently, by \eqref{cgi} and \eqref{mai},
\begin{align}\label{cgj}
\int_{-\pi\gs\sqrt n}^{\pi\gs\sqrt n} \frac{|f_n(t)-g_0(t)|}{|t|}\dd t
\le C \gL_{k+3}n^{-\xki}.
\end{align}
The same arguments yield also
\begin{align}\label{cgk}
\int_{-\pi\gs\sqrt n}^{\pi\gs\sqrt n} |f_n(t)-g_0(t)|\dd t\le C \gL_{k+3}n^{-\xki}.
\end{align}
Using the estimates \eqref{cgj} and \eqref{cgk}, the rest of the proof is
essentially the same as in \cite{Esseen}.
One of the terms, generalizing $I'_k$ on \cite[p.~58--59]{Esseen},
is
\begin{align}\label{cgs}
\int_{-\pi\gs\sqrt n}^{\pi\gs\sqrt n}\frac{ |f_n(t)-g_0(t)|}{|t+2\pi j
  \gs\sqrt n|}\dd t\le 
\frac{C}{|j|\gs\sqrt n} \gL_{k+3}n^{-\xki},
\end{align}
where we use \eqref{cgk}. This term exists for all integers $j\neq0$ with
$(2|j|-1)\pi \gs\sqrt n < T=n^{\xki}$, and thus the sum of them is bounded by
\begin{align}\label{cgu}
2\sum_{j=1}^{T}  \frac{C}{|j|\gs\sqrt n} \gL_{k+3}n^{-\xki}
\le 
C \frac{\log n}{\gs\sqrt n} \gL_{k+3}n^{-\xki};
\end{align}
this is the reason for our assumption $\log n\le \gs \sqrt n$, which 
leads to an estimate $C \gL_{k+3}n^{-\xki}$ for \eqref{cgs} too.
The remaining terms give no problems.
\end{proof}

\section{Expansions for the binomial probabilities}\label{Sex}

Taking $x=0$ in \eqref{tcge}--\eqref{tesseen}, we obtain
\begin{align}\label{q1}
  \P(S_n\le n\mu) = 
\HH_{n,k}(0)+\sum_{\ell=1}^k(-1)^{\ell-1}n^{-\ell/2}\gs^{-\ell}
  \psi_\ell\xpar{n\mu}\ddxxx{\ell}{\HH_{n,k-\ell}}(0)+\tR_{n,k}(0)
.\end{align}
If furthermore $n\mu$ is an integer, this yields,
using \eqref{ber} and  $B_0:=1$, and
for convenience defining $Q_0(x):=\Phi(x)$, 
\begin{align}\label{q2}
  \P(S_n\le n\mu)& = 
\HH_{n,k}(0)+\sum_{\ell=1}^k(-1)^{\ell-1}n^{-\ell/2}\gs^{-\ell}
  \psi_\ell\xpar{0}\ddxxx{\ell}{\HH_{n,k-\ell}}(0)+\tR_{n,k}(0)
\notag\\&
=\sum_{\ell=0}^k\sum_{j=0}^{k-\ell}n^{-(j+\ell)/2}\gs^{-\ell}\frac{(-1)^\ell B_\ell}{\ell!}
\ddxxx{\ell}{Q_j}(0)+\tR_{n,k}(0)
\notag\\&
=\frac12+\sum_{m=1}^kn^{-m/2}\sum_{\ell=0}^{m}
\gs^{-\ell}\frac{(-1)^\ell B_\ell}{\ell!}
\ddxxx{\ell}{Q_{m-\ell}}(0)+\tR_{n,k}(0)
.\end{align}
Since $\phi(x)$ is an even function, all its odd derivatives vanish at 0, and
since $\pox_{jr}=0$ unless $j\equiv r\pmod2$, it follows from \eqref{a5a}
that $\ddxxx \ell Q_j(0)=0$ when $j\equiv \ell\pmod2$ (except when $j=\ell=0$).
Hence, all terms in
\eqref{q2} with $m$ even vanish.

\begin{remark}\label{R<}
For $\P(S_n<n\mu)$, we have the same formulas 
with $\psi_1(x)$ replaced by its  left-continuous version $\psi_1(x-)$. 
(All other appearing functions are continuous.)
In \eqref{q2}, this means that the sign is changed for the terms with
$\ell=1$;
all other terms remain the same.
\end{remark}

We specialize to $X\sim\Be(p)$, with $0<p<1$, so $S_n\sim\Bi(n,p)$.
All moments exist, and we have $\gss:=p(1-p)$.
Furthermore, $\gb_j:=\E|X-p|^j\le \gb_2=\gss$; hence, recalling \eqref{a0}
and \eqref{b2},
\begin{align}\label{e1}
  |\gl_j| \le C_j \gL_j \le C_j \gs^{2-j}.
\end{align}
The condition \eqref{te3a} holds with $c=1$ for all $p$.
Hence \refT{TE3} applies provided
\begin{align}\label{e2}
  \sqrt{np(1-p)} \ge\log n,
\end{align}
and then yields, together with \refR{Rsimpler} and using \eqref{e1},
\begin{align}\label{e3}
  |R_{n,k}(x)|,
  |\tR_{n,k}(x)|
\le C_k \gL_{k+3} n^{-\xki}
\le \frac{C_k }{(n\gss)^{(k+1)/2}}
= \frac{C_k }{(np(1-p))^{(k+1)/2}}
\end{align}
We use \eqref{q2} with $k=4$ and obtain, when $np$ is an integer,
\begin{align}\label{e4}
  \P(S_n\le np)
=\frac12+h_1(p)n\qqw + h_3(p) n^{-3/2} + O\bigpar{(np(1-p))^{-5/2}},
\end{align}
where, after some calculations using \eqref{a5b} and \eqref{a2} after finding
the cumulants $\gam_3,\gam_4,\gam_5$,
\begin{align}\label{h1}
  h_1(p)&:=Q_1(0)+\frac{1}{2\gs}Q_0'(0)=
\frac{\gam_3}{6\sqrppi \gs^3} + \frac{1}{2\sqrppi\gs}
=\frac{2-p}{3\sqrppi\sqrt{p(1-p)}},
\\\label{h3}
h_3(p)&:=
Q_3(0)+\frac{1}{2\gs}Q_2'(0)+\frac{1}{12\gss}Q_1''(0)
=
\frac{(2-p)(p^2+23p-23)}{540\sqrppi(p(1-p))^{3/2}}.
\end{align}
Recall that there is no $h_2(p)$ or $h_4(p)$; the corresponding terms
vanish as said above.


\section{Proof of \refT{T1}}\label{SpfT1}
We now prove our main result.

\begin{proof}[Proof of \refT{T1}]
The main idea of the proof is to estimate $q_{m+1}-q_m$ using the estimates
above, in particular \eqref{e4},
but the details will differ for different ranges of $m$.
We write $p_m:=m/n$. We sometimes tacitly assume that $n$ is large enough.
$C_1,C_2,C_3$ will denote some large constants.

Recall $h_1(p)$ and $h_3(p)$ given in \eqref{h1}--\eqref{h3}. A simple
differentiation yields
\begin{align}\label{k1}
  h_1'(p)
=\frac{3p-2}{6\sqrppi\xpar{p(1-p)}^{3/2}}.
\end{align}
Note that $h_1'(p)=0$ for $p=2/3$, 
with $h'(p)<0$ for $0<p<2/3$ and $h'(p)>0$ for $2/3<p<1$; this is the
fundamental reason for the behaviour shown in \refT{T1}, although we also
have to treat error terms.

There is no need to calculate $h_3'(p)$ exactly; it suffices to note from
\eqref{h3} that
\begin{align}\label{k2}
  h_3'(p)= O\bigpar{(p(1-p))^{-5/2}}.
\end{align}

We treat several cases  separately. 

\pfcase{$2\log^2 n \le m  \le n-2\log^2 n$}
Both $p_m$ and $p_{m+1}$ satisfy \eqref{e2}; hence \refT{TE3} applies
and yields \eqref{e4} for both.
By subtraction and the mean value theorem, we
obtain, for some $p_m',p_m''\in[p_m,p_{m+1}]$,
recalling that $p_{m+1}-p_m=n\qw$ and using \eqref{k1}--\eqref{k2},
\begin{align}
q_{m+1}-q_m
&=
h_1'(p_m')n^{-3/2} + h_3'(p_m'') n^{-5/2} + O\bigpar{(np_m)^{-5/2}}
\label{k3-}
\\&   \label{k3}
=\frac{3p_m-2+O(1/n)}{6\sqrppi}(np_m'(1-p_m'))^{-3/2} + O\bigpar{(np_m)^{-5/2}}.
\end{align}


\pfsubcase{$2\log^2 n\le m\le n/2$} 
In this subcase, \eqref{k3} yields
\begin{align}
  \label{k3a}
q_{m+1}-q_m
&
\le
-c(np_m)^{-3/2} + O\bigpar{(np_m)^{-5/2}}
<0,
\end{align}
provided $n$ and thus $np_m=m$ is large enough.

\pfsubcase{$ n/2 \le m \le 2n/3-\CCname{\CCa}$}
In this subcase,
\eqref{k3} similarly yields
\begin{align}
  \label{k3b}
q_{m+1}-q_m
&
\le
-\frac{3\CCa}{6\sqrppi} c n^{-5/2}  + O\bigpar{n^{-5/2}}
<0,
\end{align}
provided $\CCa$ is chosen large enough.

\pfsubcase{$ 2n/3+\CCname{\CCb}\le m \le n-2\log^2 n$}
Similar arguments as in the two preceding subcases yield $q_{m+1}-q_m>0$.

\pfsubcase{$ 2n/3-\CCa\le m \le 2n/3+\CCb$}
This is the most delicate case, since $p_m-2/3=O(1/n)$ and the three 
terms in \eqref{k3-} all are of the same order.
We thus expand one step further and use \refT{TE3} with $k=6$; this yields,
again using \eqref{q2},
\begin{align}
\label{k4}
q_m=\frac12+h_1(p_m)n\qqw + h_3(p_m)n^{-3/2}+h_5(p_m)n^{-5/2}+ O\bigpar{n^{-7/2}},
\end{align}
where we note that $h_5(p)$ is a differentiable function of $p$.
(It can easily be calculated, but we do not need this.)
Taylor expansions yield, recalling $h_1'(\jjj)=0$,
\begin{align}
\label{k5}
q_m&
=\frac12+h_1\ppjjj n\qqw 
+\frac12h_1''\ppjjj\bigpar{p_m-\jjj}^2n\qqw 
+h_3\ppjjj n^{-3/2}
\notag\\&\qquad
+ h_3'\ppjjj\bigpar{p_m-\jjj}n^{-3/2}
+h_5\ppjjj n^{-5/2}
+ O\bigpar{n^{-7/2}}
\notag\\&
=G(n) + H\bigpar{m-2n/3} n^{-5/2} + O\bigpar{n^{-7/2}},
\end{align}
where we define
\begin{align}\label{kg}
  G(n)&:=
\frac12+h_1\ppjjj n\qqw 
+h_3\ppjjj n^{-3/2}
+h_5\ppjjj n^{-5/2},
\\\label{kh}
H(x)&:=
\frac12h_1''\ppjjj x^2
+ h_3'\ppjjj x
.\end{align}
The formula \eqref{k5} holds for $m+1$ too, and thus
\begin{align}\label{k6}
  q_{m+1}-q_m&=\bigpar{H(m+1-2n/3)-H(m-2n/3)}n^{-5/2}+O\bigpar{n^{-7/2}}
\notag\\&=
\Bigpar{h_1''\ppjjj \bigpar{m-\frac{2n}{3}+\frac12}
+h_3'\ppjjj}n^{-5/2}+O\bigpar{n^{-7/2}}
\end{align}
A calculation yields 
\begin{align}
  h_1''(2/3)&=\frac{27}{8\sqrt\pi},
\\
h_3'(2/3)&=\frac{9}{40\sqrt\pi}
=\frac{1}{15}h_1''(2/3).
\end{align}
Since the ratio $1/15<1/6$, it follows from \eqref{k6} that if 
$m \le (2n-2)/3$, then $q_{m+1}-q_m<0$, and  if 
$m \ge (2n-1)/3$, then $q_{m+1}-q_m>0$, for large $n$.

\pfcase{$m<2\log^2 n$}
As said in the introduction, \citet[(29)]{RigolletT} showed that (for every $n$)
$q_{m-1}\ge q_m$, and their proof actually gives $q_{m-1}>q_m$,
for $m\le n/2$.
Alternatively (for large $n$), we can argue using Poisson approximation as
in the next case; we omit the details.

\pfcase{$m>n-2\log^2 n$}
Define $q_m':=\P\bigpar{\Bi(n,m/n) < m}$.
By symmetry, $1-q_m = q'_{n-m}$; hence the claim $q_m<q_{m+1}$ is
equivalent to $q'_{m-1}<q'_m$ for $m<2\log^2 n$.

We use Poisson approximation of the binomial distribution.
It is well-known, see \eg{} \cite[Theorem 2.M]{SJI} that the total variation
distance between $\Binp$ and $\Po(np)$ is less than $p$, and thus,
in particular, 
\begin{align}\label{r1}
\bigabs{q'_m-\P\bigpar{\Po(m)<m}}
\le \dtv\bigpar{\Bi(n,p_m),\Po(np_m)}
<p_m.
\end{align}
We estimate $\P\bigpar{\Po(m)<m}$ by \refT{TE3} (or \refT{TE1}) applied to
$X\sim\Po(1)$. This yields, using \eqref{q2} and \refR{R<},
\begin{align}\label{r2}
    \P\bigpar{\Po(m)<m} = \frac12 -\frac{1}{3\sqrppi} m\qqw
  -\frac{23}{270\sqrppi} m^{-3/2} + O\bigpar{m^{-5/2}}.
\end{align}
Combining \eqref{r1} and \eqref{r2} we find, for $m\ge2$,
\begin{align}
  q_m'-q'_{m-1} =\frac{1}{6\sqrppi} m^{-3/2} + O\bigpar{m^{-5/2}} + O(m/n).
\end{align}
This shows that $q'_m>q'_{m-1}$ for $\CCname{\CCc} <m <2\log^2 n$  
and $n$ large. (In fact, for $\CCc<m <cn^{2/5}$.)

In the remaining subcase $m\le \CCc$, $q_{m-1}'<q'_{m}$ follows from \eqref{r1}
and \refL{LPo} below. 

These cases cover all $m$, which completes the proof of \refT{T1}.
\end{proof}

The proof used the following lemma, of independent interest.
It gives two Poisson versions of the inequality 
mentioned above
for the binomial distribution 
shown by \citet{RigolletT}.
We use their method of proof. 

\begin{lemma}\label{LPo}
For every integer $m\ge0$,
\begin{align}\label{lp1}
  \P\bigpar{\Po(m)<m} &< \P\bigpar{\Po(m+1)<m+1}<\frac12,
\\\label{lp2}
  \P\bigpar{\Po(m)\le m} &> \P\bigpar{\Po(m+1)\le m+1}>\frac12
.\end{align}
\end{lemma}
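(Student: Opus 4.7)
The plan is to reduce both inequalities to the strict unimodality in $\lambda$ of the Poisson point mass $p(\lambda,k):=\P(\Po(\lambda)=k)$. Set $F(\lambda,k):=\P(\Po(\lambda)\le k)$, so that $a_m:=\P(\Po(m)<m)=F(m,m-1)$ and $b_m:=\P(\Po(m)\le m)=F(m,m)$, and recall $\partial_\lambda F(\lambda,k)=-p(\lambda,k)$.

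For \eqref{lp1} I would decompose
\begin{align}
a_{m+1}-a_m
&=\bigsqpar{F(m+1,m)-F(m,m)}+\bigsqpar{F(m,m)-F(m,m-1)}\notag\\
&=p(m,m)-\int_m^{m+1}p(\lambda,m)\dd\lambda.
\end{align}
A direct calculation gives $\partial_\lambda p(\lambda,m) = e^{-\lambda}\lambda^{m-1}(m-\lambda)/m!$ for $m\ge1$ (and $\partial_\lambda p(\lambda,0)=-e^{-\lambda}$), so $p(\cdot,m)$ is strictly decreasing on $[m,m+1]$, the integral is strictly smaller than $p(m,m)$, and $a_{m+1}>a_m$ follows. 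An entirely analogous decomposition yields
\begin{align}
b_{m+1}-b_m = p(m,m+1)-\int_m^{m+1}p(\lambda,m+1)\dd\lambda,
\end{align}
and now $\partial_\lambda p(\lambda,m+1)=e^{-\lambda}\lambda^m(m+1-\lambda)/(m+1)!>0$ on $[m,m+1)$, so $p(\cdot,m+1)$ is strictly increasing on $[m,m+1]$, the integral strictly exceeds $p(m,m+1)$, and $b_{m+1}<b_m$.

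It remains to show $a_{m+1}<\tfrac12<b_{m+1}$. For this I would combine the strict monotonicity just established with the fact that $a_m,b_m\to\tfrac12$ as $\mtoo$, which follows from the central limit theorem applied to $\Po(m)\eqd\sum_{i=1}^m\Po(1)$ (both $\P((\Po(m)-m)/\sqrt m\le 0)$ and $\P((\Po(m)-m)/\sqrt m\le -1/\sqrt m)$ converge to $\Phi(0)=\tfrac12$). Since $(a_m)$ is strictly increasing with limit $\tfrac12$, every $a_{m+1}<\tfrac12$; since $(b_m)$ is strictly decreasing with the same limit, every $b_{m+1}>\tfrac12$.

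There is no substantive obstacle: the key input is the one-line observation that, for fixed $k\ge0$, $p(\lambda,k)$ is strictly unimodal in $\lambda$ with peak at $\lambda=k$, and the CLT input is standard. The edge case $m=0$ is covered by the trivial values $F(0,-1)=0$ and $\partial_\lambda p(\lambda,0)=-e^{-\lambda}<0$.
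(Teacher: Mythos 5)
Your proof is correct, and at its core it is the same argument as the paper's: both reduce the monotonicity claims to the fact that $t\mapsto t^ke^{-t}$ is strictly decreasing for $t\ge k$ and strictly increasing for $t\le k$, and your final inequalities, e.g.\ $p(m,m)-\int_m^{m+1}p(\lambda,m)\dd\lambda>0$, coincide (up to sign and a factor $1/m!$) with the paper's $\int_m^{m+1}\bigpar{t^me^{-t}-m^me^{-m}}\dd t<0$. The differences are in packaging and in completeness. The paper uses the Poisson-process/Gamma representation $\P(\Po(m)<m)=\P(T_m>m)$ with $T_m\sim\gG(m)$ and an integration by parts, whereas you use the differential form of the same fact, $\partial_\lambda\P(\Po(\lambda)\le k)=-p(\lambda,k)$, and split the increment $a_{m+1}-a_m$ (resp.\ $b_{m+1}-b_m$) into a parameter-shift term and a threshold-shift term; these are equivalent routes to the identical key inequality. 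One genuine addition on your side: you prove the bounds $a_{m+1}<\tfrac12<b_{m+1}$ explicitly, combining the strict monotonicity with the CLT limit $a_m,b_m\to\tfrac12$, while the paper's displayed proof only establishes the monotone comparisons and leaves the $\tfrac12$ bounds implicit (they follow exactly as you argue, or from Neumann's result that the mean is a strong median, cited in the introduction for the Poisson case as well). So: correct, same essential proof, with a mildly different derivation and a more self-contained treatment of the half-bounds.
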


\begin{proof}
We may assume $m\ge1$.
  Consider a Poisson process with intensity 1 on $(0,\infty)$ and denote its
  points by $T_1<T_2<\dots$.
The number of points in $[0,m]$ is $\Po(m)$, and thus $\Po(m)<m\iff T_m>m$.
The distribution of $T_m$ is $\gG(m)$, and thus
\begin{align}
  \label{t1}
\P\bigpar{\Po(m)<m} = \P(T_m>m)
=
\int_m^\infty\frac{t^{m-1}}{(m-1)!}e^{-t}\dd t.
\end{align}
Hence,
\begin{align}
  \label{t2}
\qquad&\hskip-2em \P\bigpar{\Po(m)<m} - \P\bigpar{\Po(m+1)<m+1}
\notag\\&
=
\int_m^\infty\frac{t^{m-1}}{(m-1)!}e^{-t}\dd t
- \int_{m+1}^\infty\frac{t^{m}}{m!}e^{-t}\dd t
\notag\\&
=
\int_m^\infty\lrpar{\frac{t^{m-1}}{(m-1)!}e^{-t}-\frac{t^m}{m!}e^{-t}}\dd t
+ \int_m^{m+1}\frac{t^{m}}{m!}e^{-t}\dd t
\notag\\&
=
\Bigsqpar{\frac{t^m}{m!}e^{-t}}_m^\infty
+ \int_m^{m+1}\frac{t^{m}}{m!}e^{-t}\dd t
=
\int_m^{m+1}\frac{t^{m}e^{-t}-m^me^{-m}}{m!}\dd t<0,
\end{align}
since $t^m e^{-t}$ is decreasing for $t\ge m$.
Similarly,
\begin{align}
  \label{t3}
\qquad&\hskip-2em \P\bigpar{\Po(m)\le m} - \P\bigpar{\Po(m+1)\le m+1}
\notag\\&
=
\int_m^\infty\frac{t^{m}}{m!}e^{-t}\dd t
- \int_{m+1}^\infty\frac{t^{m+1}}{(m+1)!}e^{-t}\dd t
\notag\\&
=
\int_m^\infty\lrpar{\frac{t^{m}}{m!}e^{-t}-\frac{t^{m+1}}{(m+1)!}e^{-t}}\dd t
+ \int_m^{m+1}\frac{t^{m+1}}{(m+1)!}e^{-t}\dd t
\notag\\&
=
\int_m^{m+1}\frac{t^{m+1}e^{-t}-m^{m+1}e^{-m}}{(m+1)!}\dd t>0,
\end{align}
since $t^{m+1} e^{-t}$ is increasing for $t\le m+1$.
\end{proof}

\newcommand\AAP{\emph{Adv. Appl. Probab.} }
\newcommand\JAP{\emph{J. Appl. Probab.} }
\newcommand\JAMS{\emph{J. \AMS} }
\newcommand\MAMS{\emph{Memoirs \AMS} }
\newcommand\PAMS{\emph{Proc. \AMS} }
\newcommand\TAMS{\emph{Trans. \AMS} }
\newcommand\AnnMS{\emph{Ann. Math. Statist.} }
\newcommand\AnnPr{\emph{Ann. Probab.} }
\newcommand\CPC{\emph{Combin. Probab. Comput.} }
\newcommand\JMAA{\emph{J. Math. Anal. Appl.} }
\newcommand\RSA{\emph{Random Structures Algorithms} }
\newcommand\DMTCS{\jour{Discr. Math. Theor. Comput. Sci.} }

\newcommand\AMS{Amer. Math. Soc.}
\newcommand\Springer{Springer-Verlag}
\newcommand\Wiley{Wiley}

\newcommand\vol{\textbf}
\newcommand\jour{\emph}
\newcommand\book{\emph}
\newcommand\inbook{\emph}
\def\no#1#2,{\unskip#2, no. #1,} 
\newcommand\toappear{\unskip, to appear}

\newcommand\arxiv[1]{\texttt{arXiv}:#1}
\newcommand\arXiv{\arxiv}

\def\nobibitem#1\par{}

\section*{Acknowledgement}
This work is dedicated to the memory of my teacher and colleague Carl-Gustav
Esseen (1918--2001).
I thank \Vasek{} \Chvatal{} for introducing me to this problem,
and Xing Shi Cai for finding two errors in a previous version.


\begin{thebibliography}{99}


\bibitem[Barbour, Holst and Janson(1992)]{SJI}
  A. D. Barbour, Lars Holst \& Svante Janson:
  \emph{Poisson Approximation}.
  Oxford University Press, Oxford, 1992.
  \MR{1163825}


\bibitem{Berry}
Andrew C. Berry:
The accuracy of the Gaussian approximation to the sum of independent variates.
\emph{Trans. Amer. Math. Soc.} \vol{49} (1941), 122--136.
\MR0003498

\bibitem[Brown,  Cai and DasGupta(2002)]{BCD}
Lawrence D. Brown, T. Tony  Cai \&  Anirban DasGupta:
Confidence intervals for a binomial proportion and asymptotic expansions. 
\emph{Ann. Statist.} \vol{30} (2002), no. 1, 160--201.
\MR1892660

\bibitem[Doerr(2018)]{Doerr}
Benjamin Doerr:
An elementary analysis of the probability that a binomial random variable
exceeds its expectation. 
\emph{Statist. Probab. Lett.} \vol{139} (2018), 67--74.
\MR3802185

\bibitem[Esseen(1942)]{Esseen42}
Carl-Gustav Esseen:
On the Liapounoff limit of error in the theory of probability.
\emph{Ark. Mat. Astr. Fys.} \vol{28A} (1942), no. 9, 19 pp.
\MR0011909

\bibitem[Esseen(1945)]{Esseen}
Carl-Gustav Esseen:
Fourier analysis of distribution functions. A mathematical study of the
Laplace--Gaussian law. 
\emph{Acta Math.} \vol{77} (1945), 1--125.
\MR0014626


\bibitem{GreenbergM}
Spencer Greenberg \& Mehryar Mohri:
Tight lower bound on the probability of a binomial exceeding its expectation.
\emph{Statist. Probab. Lett.} \vol{86} (2014), 91--98.
\MR3162723 

\bibitem{Gut}
Allan Gut:
\emph{Probability: A Graduate Course},
2nd ed., Springer, New York, 2013. 
\MR2977961 

\bibitem{JogdeoS}
Kumar Jogdeo \& S. M. Samuels:
Monotone Convergence of Binomial Probabilities and a Generalization of
Ramanujan's Equation. 
\emph{Ann. Math. Statist.} \vol{39} (1968), 1191--1195.


\bibitem[Kaas and Buhrman(1980)]{KaasB}
R. Kaas \&  J. M. Buhrman:
Mean, median and mode in binomial distributions.
\emph{Statist. Neerlandica} \vol{34} (1980), no. 1, 13--18. 
\MR{0576005} 

\bibitem{KnuthIV.5}
Donald E. Knuth: 
\emph{The Art of Computer Programming, Vol. 4, Fascicle 5: Mathematical
Preliminaries Redux; Introduction to Backtracking; Dancing Links},
Addison--Wesley, 2020.


\bibitem[Neumann(1966)]{Neumann}
Peter Neumann:
\"Uber den Median der Binomial- und Poissonverteilung. 
\emph{Wiss. Z. Techn. Univ. Dresden} \vol{15} (1966), 223--226.
\MR0210165
\bibitem{NIST}
\emph{NIST Handbook of Mathematical Functions}. 
Edited by Frank W. J. Olver, Daniel W. Lozier, Ronald F. Boisvert and
Charles W. Clark. 
Cambridge Univ. Press, 2010. \\
Also available as 
\emph{NIST Digital Library of Mathematical Functions},
\url{http://dlmf.nist.gov/}


\bibitem{PelekisR}
Christos Pelekis \& Jan Ramon:
A lower bound on the probability that a binomial random variable is
exceeding its mean. 
\emph{Statist. Probab. Lett.} \vol{119} (2016), 305--309.
\MR3555302 

\bibitem[Petrov(1975)]{Petrov}
Valentin V. Petrov:
\emph{Sums of Independent Random Variables}.
Springer-Verlag, Berlin, 1975.
\MR0388499


\bibitem[Rigollet and Tong(2011)]{RigolletT}
Philippe Rigollet \& Xin Tong:
Neyman--Pearson classification, convexity and stochastic constraints.
\emph{J. Mach. Learn. Res.} \vol{12} (2011), 2831--2855.
\MR2854349

\bibitem{Slud}
 Eric V. Slud:
Distribution inequalities for the binomial law.
\emph{Ann. Probability} \vol5 (1977), no. 3, 404--412.
\MR0438420




\end{thebibliography}
\end{document}